\theoremstyle{plain}
\newtheorem{theorem}{Theorem}[section]
\newtheorem{lemma}[theorem]{Lemma}
\theoremstyle{definition}
\newtheorem{example}[theorem]{Example}
\newcommand{\norm}[1]{\left\lVert#1\right\rVert}
\title{P-adic Poissonian Pair Correlations via the Monna Map}
\author{Christian Weiss}
\date{\today}
\begin{document}

\maketitle

\begin{abstract} 
Although the existence of sequences in the p-adic integers with Poissonian pair correlations has already been shown, no explicit examples had been found so far. In this note we discuss how to transfer real sequences with Poissonian pair correlations to the p-adic setting by making use of the Monna map.
\end{abstract}

\section{Introduction}
The concept of Poissonian pair correlations in $\mathbb{R}$ was popularized by Rudnick and Sarnak in \cite{RS98} and has attracted significant attention since. A sequence $(x_n)_{n \in \mathbb{N}} \subset [0,1]$ is said to have $\alpha$-Poissonian pair correlations for $0 < \alpha \leq 1$ if
$$F_{N,\alpha}(s) := \frac{1}{N^{2-\alpha}} \# \left\{ 1 \leq k \neq l \leq N \ : \ \left\| x_k - x_l\right\|_{\infty} \leq \frac{s}{N^\alpha} \right\},$$
where $\left\| \cdot \right\|$ is the distance of a number from its nearest integer, converges to $2s$ for all $s \geq 0$. Thus it describes a special behaviour of gaps on a local scale and is a generic property of random sequences in $[0,1)$ drawn from uniform distribution, see e.g. \cite{HKL19}. Probably the most famous explicit example of a sequence in $[0,1)$ with Poissoinan pair correlations for $\alpha = 1$, or short Poissoinan pair correlations, is the sequence $(x_n)_{n \in \mathbb{N} \setminus \Box} = \{ \sqrt{n}\}$, where $\Box$ denotes the set of perfect squares and $\{ \cdot \}$ is the fractional part of a number, see \cite{BMV15}. Other examples have been constructed e.g. in \cite{LST24}.\\[12pt]
Poissonian pair correlations may not only be defined for sequences of one-dimensional real numbers but were among others generalized to higher dimension, see \cite{HKL19}, higher orders, see \cite{HZ23}. and the p-adic integers $\mathbb{Z}_p$, see \cite{Wei23}. In this paper we are interested in the p-adic case.\\[12pt]
For a prime-number $p \in \mathbb{Z}$, the p-adic numbers $\mathbb{Q}_p$ are the completion of $\mathbb{Q}$ with respect to the p-adic absolute value$ |\cdot|_p$, see Section~\ref{sec:proofs} for more details. The p-adic integers are defined as
$$\mathbb{Z}_p=\left\{ x \in \mathbb{Q}_p \, : \, \left|x\right|_p \leq 1 \right\} \subset \mathbb{Q}_p$$
and may thus be regarded as the p-adic analogue of $[0,1) \subset \mathbb{R}.$
For any $z \in \mathbb{Z}_p$ and $s \in \mathbb{R}_0^+$, the disc of radius $s$ around $z$  is defined by
$$D_p(z,s) := \left\{ x \in \mathbb{Z}_p \, : \, \left| x - z \right|_p \leq s \right\} \subset \mathbb{Z}_p$$ 
Note that the p-adic absolute value can only take values $p^{-k}$ and thus $$D_p(z,s) = D_p(z,p^{-k_0})$$ for the smallest $k_0$ with $p^{-k_0} \leq s$. For the Haar measure on $\mathbb{Q}_p$ we thus have $\mu(D_p(z,s)) = \mu(D_p(z,p^{-k_0})) = p^{-k_0}.$\\[12pt]
According to \cite{Wei23} p-adic Poissonian pair correlations for $0 < \alpha \leq 1$ are defined in the following way: let $(x_n)_{n \in \mathbb{N}} \subset \mathbb{Z}_p$ be a sequence and let
$$F_{N,\alpha,p}(s) := \frac{1}{N^2} \frac{1}{\mu\left( D_p(0,s/N^\alpha)\right)} \# \left\{ 1 \leq i \neq j \leq N \, : \, \left|x_i-x_j\right|_p \leq \frac{s}{N^\alpha} \right\}.$$
We say that $(x_n)_{n\in \mathbb{N}} \subset \mathbb{Z}_p$ has $\alpha$-Poissonian pair correlations for $0 \leq \alpha \leq 1$ if 
$$\lim_{N \to \infty} F_{N,\alpha,p}(s) = 1$$ 
for all $s \geq 0$. If $\alpha = 1$, then we also just speak of Poissonian pair correlations. The necessity to normalize the limit to $1$ is essentially caused by the jumps in $\mu(D_p(0,s/N^\alpha))$ whenever the radius decreases below a new negative power of the prime $p$.\\[12pt]
In \cite{Wei23}, Theorem 1.4, it was shown that a sequence of independent random variables, which are uniformly distributed on $\mathbb{Z}_p$, almost surely has $\alpha$-Poissonian pair correlations for any $0 < \alpha \leq 1$. Moreover the simple sequence $(n)_{n \in \mathbb{N}}$ was identified as an example with $\alpha$-Poissonian for all $0 < \alpha < 1$ but fails to have Poissonian pair correlations. No explicit example has been found in the case $\alpha = 1$ so far and it was asked in \cite{Wei23} how to construct any. Even worse, it was shown in \cite{Wei24}, Theorem 1.6, that there does not exist any sequence $(x_n)_{n \in \mathbb{N}} = (f(n))_{n \in \mathbb{N}} \subset \mathbb{Z}_p$, where $f$ is a polynomial, with Poissonian pair correlations.\\[12pt]
In this note, we show how to transfer any sequence $(x_n)_{n \in \mathbb{N}} \subset [0,1)$ with $\alpha$-Poissonian pair correlations into a sequence $(y_n)_{n \in \mathbb{N}} \subset \mathbb{Z}_p$ with the same property in the p-adic sense. For this purpose the so-called Monna  map $\varphi_p: \mathbb{Z}_p \to \mathbb{R}$, which was introduced in \cite{Mon52}, serves as the main tool. Let
$$x = \sum_{i=0}^\infty a_i p^i$$
with $0 \leq a_i \leq p-1$ be a p-adic number. Then the Monna map is defined as
\begin{equation} \label{eq:vdc}
\varphi(x) = y = \sum_{i=0}^\infty a_i p^{-i-1}.
\end{equation}
Evidently, the map is linked to the definition of van der Corput sequences known from discrepancy theory, see e.g. \cite{Mei67,Wei24}. While $\varphi$ is obviously surjective, it is not injective on all of its domain because the right hand side of \eqref{eq:vdc} is not unique due to the limit of the geometric series. Nonetheless, an inverse map $\varphi^{-1}$ exists on the set
$$\left\{ x \in [0,1) \, : \, x = \sum_{i=0}^\infty a_i p^{-i-1}, \ a_i \neq p-1 \, \textrm{for infinitely many} \ i \right\}.$$
Moreover, we know that the inverse image of $x$ under $\varphi$ contains the element
$$y = \sum_{i=0}^\infty a_i p^{i-1}$$
because
$$\varphi(\sum_{i=0}^\infty a_i p^{i-1}) = \sum_{i=0}^\infty a_i p^{-(i-1)-1} = \sum_{i=0}^\infty a_i p^{-i}$$
and we may thus define $\varphi^{-1}(x):=y$ (of course $\varphi^{-1}$ is not the inverse function of $\varphi$, but we do not think this slight abuse of notation is misleading). Special care has to be taken about the interaction of the Monna map with the p-adic absolute value. Further details are described in Lemma~\ref{lem:monna} and the discussion thereafter. Hence, $\varphi^{-1}$ is defined uniquely and the map $\varphi^{-1}$ has the following transference property which is the main result of this note. 
\begin{theorem} \label{thm:transfer} Let $0 < \alpha \leq 1$. If $y_k = \varphi^{-1}(x_k) \in [0,1)$ has $\alpha$-Poissonian pair correlations, then $x_k$ has $\alpha$-p-adic Poissonian pair correlations. 
%A sequence $(x_n) \in \mathbb{Z}_p$ of distinct elements has weak Poissonian pair correlations of order $\alpha$ if and only if $(\varphi(x_n)) \in [0,1)$ has weak Poissonian pair correlations of order $\alpha$.
\end{theorem}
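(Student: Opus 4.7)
The plan is to push the $p$-adic ball condition through the Monna correspondence into a combinatorial condition on base-$p$ digit prefixes of the real companions, and then to match this count against the hypothesised real pair-correlation asymptotic.

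Fix $s>0$ and let $k_0=k_0(N,s)$ denote the smallest integer with $p^{-k_0}\le s/N^\alpha$, so that $\mu(D_p(0,s/N^\alpha))=p^{-k_0}$. By Lemma~\ref{lem:monna}, the inequality $|x_i-x_j|_p\le s/N^\alpha$ is equivalent to $\lfloor p^{k_0}y_i\rfloor=\lfloor p^{k_0}y_j\rfloor$, i.e.\ to $y_i,y_j$ lying in a common canonical interval $I_m=[mp^{-k_0},(m+1)p^{-k_0})$. Writing $N_m=\#\{k\le N : y_k\in I_m\}$, the convergence $F_{N,\alpha,p}(s)\to 1$ becomes the asymptotic
\[
\sum_{m=0}^{p^{k_0}-1} N_m^2 \;=\; N^2 p^{-k_0}(1+o(1)).
\]

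To access this sum from real PPC data I would combine two ingredients. First, because each $F_{N,\alpha}(\cdot)$ is monotone non-decreasing and its limit $2(\cdot)$ continuous, Polya's theorem upgrades pointwise to uniform convergence on compact subsets of $(0,\infty)$; applied with $s_1(N):=p^{-k_0}N^\alpha\in(s/p,s]$ this yields $B_N:=\#\{i\ne j : \|y_i-y_j\|_\infty \le p^{-k_0}\} \sim 2 p^{-k_0} N^2$, and used uniformly for $r\le p^{-k_0}$ also $\sum_{0<d_{ij}\le p^{-k_0}} d_{ij} \sim N^2 p^{-2k_0}$. Second, a shift-averaging identity: for $t\in[0,p^{-k_0})$ let $A_N(t)$ be the same-cell count for the partition of $[0,1)$ shifted by $t$; since any pair at torus distance $d\le p^{-k_0}$ lies in a common shifted cell for a set of $t$ of Lebesgue measure $p^{-k_0}-d$, these inputs combine to give
\[
p^{k_0}\int_0^{p^{-k_0}} A_N(t)\,dt \;=\; B_N - p^{k_0}\sum_{0<d_{ij}\le p^{-k_0}} d_{ij} \;\sim\; p^{-k_0} N^2,
\]
so the shift-averaged cell count already matches the target.

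The delicate point is to transfer this from the shift-averaged count to the count at the specific shift $t=0$ dictated by the Monna map. Because $A_N(t)$ is a step function in $t$ whose total variation can be comparable to the count itself, monotonicity in $t$ alone does not close the gap; the remaining regularity must be extracted from PPC itself. I would address this by iterating the shift-averaging argument at each finer dyadic scale $p^{-k}$, $k>k_0$, and combining the decomposition $N_m=\sum_{l=0}^{p-1}M_{pm+l}$ of coarse into refined bin counts with the Cauchy--Schwarz lower bound $\sum_m N_m^2 \ge N^2 p^{-k_0}$, so as to pin down the canonical digit-prefix profile inductively rather than only on shift-average. This bootstrap, leveraging pair-correlation information at many scales simultaneously, is where the main analytic content of the theorem concentrates.
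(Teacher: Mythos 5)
Your reduction of the ball condition to digit prefixes is correct: with the canonical expansion underlying $\varphi^{-1}$, $|x_i-x_j|_p\le s/N^\alpha$ is indeed equivalent to $y_i,y_j$ lying in the same canonical cell of length $p^{-k_0}$, and your shift-average identity and the use of monotonicity plus continuity of the limit to get uniform convergence on compacts are sound (modulo minor bookkeeping: the target should be $\sum_m N_m^2-N=N^2p^{-k_0}(1+o(1))$, and for $\alpha=1$ the diagonal term $N$ is \emph{not} of lower order, it contributes $\asymp 1/s$ to $F_{N,1,p}(s)$; also wrap-around pairs, which the torus norm counts, never share a cell, so the averaging identity should be phrased with $|y_i-y_j|$). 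The genuine gap is exactly where you say it is, and it is not closed: what you prove is that the same-cell count averaged over all translates $t\in[0,p^{-k_0})$ of the partition has the right size, whereas the theorem requires it for the single translate $t=0$ singled out by the Monna map. Pair correlations constrain only the multiset of distances $|y_i-y_j|$ at scale $N^{-\alpha}$; they say nothing about where the close pairs sit relative to the fixed grid $p^{-k_0}\mathbb{Z}$, and that is precisely what separates $A_N(0)$ from its shift average. The Cauchy--Schwarz inequality $\sum_m N_m^2\ge N^2p^{-k_0}$ holds for every point set without any correlation hypothesis, so it only reproduces the trivial lower bound; the matching upper bound at $t=0$ is the entire difficulty, and the ``bootstrap over finer scales'' is announced but never formulated as an inequality that would force $A_N(0)$ toward its average. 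As written, the proposal is therefore an incomplete proof: its final step is a hope, not an argument.

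For comparison, the paper's proof never forms cell counts or shift averages. Using Lemma~\ref{lem:monna}(ii) together with the choice made in the definition of $\varphi^{-1}$, it replaces the condition $|x_l-x_m|_p\le p^{-k}$ directly by the real condition $|y_l-y_m|\le p^{-k}$, declaring the discrepancy to consist only of pairs whose difference equals one exact boundary value; since the $y_k$ may be taken distinct, each element then has at most two such partners, so at most $2N$ exceptional pairs, which are negligible for $\alpha<1$ and are ruled out for $\alpha=1$ by a short contradiction with the pair-correlation hypothesis along a subsequence. It then writes $s/N^\alpha=p^{-k}x$ with $x\in(1/p,1]$ and concludes with exactly the monotone-plus-continuous-limit uniform convergence statement (Lemma~\ref{lem:convergence}) that you invoke as P\'olya's theorem; that lemma, and the reduction to the threshold $p^{-k}$, are the only points of contact between your plan and the paper's argument. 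In other words, the transfer through the Monna map, which the paper performs in a few lines via its claimed near-equivalence of the two counting conditions, is replaced in your write-up by the shift-averaging scheme, which makes the straddling pairs (close pairs separated by a point of $p^{-k_0}\mathbb{Z}$) explicit but then does not control them; to complete your route you would need to show that these straddling pairs take up exactly their average share of the close pairs at every relevant scale, and nothing in the current text extracts that from the hypothesis.
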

It is possible to immediately derive examples of sequences with p-adic $\alpha$-Poissonian pair correlations from it.
\begin{example} Choose a prime basis $p$ and let $(x_n)_{n \in \mathbb{N}}$ be the van der Corput sequence in base $b$, which is defined via $x_n=\varphi(n)$ or put differently $n = \varphi^{-1}(x_n)$. It is well-known that the van der Corput sequence in base $p$ has $\alpha$-Poissonian pair correlations for any $0 < \alpha < 1$, see e.g. \cite{Wei22a}. Hence $(x_n) = n \in \mathbb{Z}_p$ has $\alpha$-Poissonian pair correlations as well by Theorem~\ref{thm:transfer}. This fact was proven differently in \cite{Wei23} as well. However, neither the van der Corput sequence has real Poissonian pair correlations nor does the sequence $(n)_{n \in \mathbb{N}} \subset \mathbb{Z}_p$ have p-adic ones.
\end{example}
\begin{example} According to \cite{BMV15}, the sequence $(x_n)_{n \in \mathbb{N} \setminus \Box} = (\{ \sqrt{n}\})_{n \in \mathbb{N}}$ of real numbers has Poissonian pair correlations. Then $(\varphi^{-1}(x_n))_{n \in \mathbb{N}} \subset \mathbb{Z}_p$ is a sequence of p-adic integers with Poissonian pair correlations by Theorem~\ref{thm:transfer}. For $p=3$ we have for example
\begin{align*}
\varphi^{-1}(\{\sqrt{2}\}) & = 52102 + \sum_{k=10}^\infty a_kp^{k},\\
\varphi^{-1}(\{\sqrt{3}\}) & = 58142 + \sum_{k=10}^\infty a_kp^{k},\\
\varphi^{-1}(\{\sqrt{5}\}) & = 33081 + \sum_{k=10}^\infty a_kp^{k}.
\end{align*}
In Figure~\ref{fig:plot}, the plot of the p-adic pair correlation function $F_{N,1,3}(s)$ is plotted for $s \in \{0.1, 0.25, 0.5, 1, 2\}$ and $1 \leq N \leq 5.000$. The convergence of the function towards $1$ becomes plausible despite that there are big deviations from the limit for small $N$. As in the real case, the speed of convergence is not very fast and even for $N=5.000$ the values of $F_{5.000,1,3}(s)$ still vary between $0.98$ and $1.13$. Moreover, note that the visible jumps in the plot occur exactly when $s/N$ passes a new negative power of $p=3$. 
\end{example}
\begin{figure}[!ht]
     \centering
         \centering\includegraphics[width=1\textwidth]{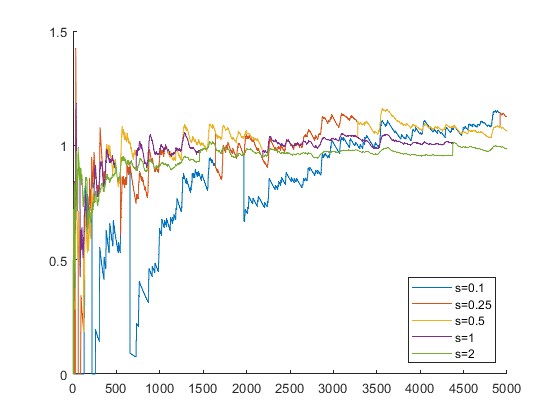}
         \caption{The p-adic pair correlation function $F_{N,1,3}(s)$ of $\varphi^{-1}(\{\sqrt{n}\})$ for $1 \leq N \leq 5.000$ and $s \in \{0.1, 0.25, 0.5, 1, 2\}$.}
         \label{fig:plot}
\end{figure}
The idea to use the Monna map to transfer properties from real uniform distribution theory to the p-adic setting has already been applied in \cite{Mei67} in the context of discrepancy theory, where the author proved the following.
\begin{theorem}[\cite{Mei67}, Theorem 4] A sequence $(x_n)_{n \in \mathbb{N}}$ is uniformly distributed in $\mathbb{Z}_p$ if and only if the real sequence $\varphi(x_n)$ is uniformly distributed in $[0,1)$.    
\end{theorem}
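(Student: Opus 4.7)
The plan is to realize $\varphi$ as a measure-preserving identification (up to a countable boundary set) between the balls of $\mathbb{Z}_p$ and the distinguished ``$p$-ary'' subintervals of $[0,1)$, and then invoke the standard ball/interval characterizations of uniform distribution on each side. The proof is essentially combinatorial bookkeeping once this correspondence is pinned down.

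First I would unwind the definition of $\varphi$ at the digit level. Given $z=\sum_{i=0}^\infty b_i p^i$ and $k\ge 1$, the ball $D_p(z,p^{-k})$ is exactly the set of $x\in\mathbb{Z}_p$ whose first $k$ digits coincide with those of $z$. Setting $c:=\sum_{i=0}^{k-1}b_i p^{-i-1}=jp^{-k}$ for the integer $0\le j<p^k$ read off from those digits, the tail $\sum_{i\ge k}a_i p^{-i-1}$ ranges over $[0,p^{-k}]$ as $x$ ranges over the ball, so
$$\varphi(D_p(z,p^{-k}))=[jp^{-k},(j+1)p^{-k}].$$
The right endpoint is attained only by the single p-adic integer $x^{\star}$ with digits $b_0,\dots,b_{k-1},p-1,p-1,\dots$, and hence
$$\#\{n\le N:\varphi(x_n)\in[jp^{-k},(j+1)p^{-k})\}=\#\{n\le N:x_n\in D_p(z,p^{-k})\}-\#\{n\le N:x_n=x^{\star}\}.$$
As $z$ varies, the $p^k$ balls of radius $p^{-k}$ bijectively produce the $p^k$ half-open $p$-ary intervals at scale $p^{-k}$, with the correct mass $p^{-k}$ on both sides.

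Next I would invoke the standard characterizations. On the p-adic side, $(x_n)$ is UD in $\mathbb{Z}_p$ iff for every ball $D_p(z,p^{-k})$ the relative frequency converges to the Haar measure $p^{-k}$, since the balls form a $\pi$-system of continuity sets generating the Borel $\sigma$-algebra. On the real side, by sandwiching any subinterval $[a,b)\subset[0,1)$ between finite unions of $p$-ary intervals $[jp^{-k},(j+1)p^{-k})$ with total length error at most $2p^{-k}\to 0$, UD in $[0,1)$ is equivalent to the relative frequency in every such $p$-ary interval converging to $p^{-k}$. One also needs the elementary observation that a sequence satisfying either UD property necessarily has asymptotic density $0$ at any single point $x^{\star}$: the single-point mass is bounded above by the frequency in a shrinking ball/interval around it, whose limit is made arbitrarily small. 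Combining this with the identity of Step~1, the count in a $p$-ary interval and the count in the corresponding p-adic ball differ by $o(N)$, so the convergence of one frequency to $p^{-k}$ for every $z,k$ is equivalent to the convergence of the other to $p^{-k}$ for every $j,k$; by the two characterizations, this is precisely the claimed equivalence.

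The main (and essentially only) obstacle is the single-point ambiguity created by the non-injectivity of $\varphi$ on p-adic integers whose expansions end in all $(p-1)$s. This is neutralized by absorbing it into an $o(N)$ error via the zero-density-at-points remark; no approximation or Fourier input is needed beyond that. The remaining work is purely the translation between the two characterizations via the dyadic/ball correspondence established in Step~1.
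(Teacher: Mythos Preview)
The paper does not give its own proof of this statement; it is quoted verbatim as Theorem~4 of \cite{Mei67} purely for context, and no argument is reproduced. So there is nothing on the paper's side to compare against.

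Your proposal is essentially correct and would stand as a proof. One small bookkeeping slip: the displayed identity
\[
\#\{n\le N:\varphi(x_n)\in[jp^{-k},(j+1)p^{-k})\}
=\#\{n\le N:x_n\in D_p(z,p^{-k})\}-\#\{n\le N:x_n=x^{\star}\}
\]
is not quite exact. You correctly observe that within the ball only $x^{\star}$ hits the right endpoint $(j+1)p^{-k}$, but for $j\ge 1$ the \emph{left} endpoint $jp^{-k}$ is also attained by one p-adic integer $y^{\star}$ lying \emph{outside} $D_p(z,p^{-k})$ --- namely the tail-all-$(p-1)$ element of the ball whose image is $[(j-1)p^{-k},jp^{-k}]$. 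So the two counts differ by at most $\#\{n\le N:x_n=x^{\star}\}+\#\{n\le N:x_n=y^{\star}\}$. This is still $o(N)$ by your zero-density-at-points remark, so the rest of the argument goes through unchanged.
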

In the light of Meijer's result we ask, if $\varphi(x_n)_{n \in \mathbb{N}}$ has (real) Poissoian pair correlations if $(x_n)_{n \in \mathbb{N}}$ has this property in the p-adic sense. We leave this question open for future research.  As our proof of Theorem~\ref{thm:transfer} fundamentally relies on the monotonicity of $F_{N,\alpha}(s)$ in $s$ (Lemma~\ref{lem:convergence}) this question cannot be answered by simply following the lines of the proof presented here and reversing the direction of the arguments.
\section{A Transference Principle} \label{sec:proofs}
We shortly recall some basics about the p-adic numbers, which can be found e.g. in \cite{Neu99}. Let $p \in \mathbb{Z}$ be a prime number. For $a = \frac{b}{c}$ with $b,c \in \mathbb{Z} \setminus \{ 0 \}$, let $m$ be the highest possible power with $a = p^m \frac{b'}{c'}$ and $(b'c',p)=1$. The p-adic absolute value is defined as
$$|a|_p:= p^{-m}.$$
The field $\mathbb{Q}_p$ is the completion of $\mathbb{Q}$ with respect to $|\cdot|_p$. The p-adic integers are the closure of $\mathbb{Z}$ in this field. As $\mathbb{Z}_p = \{ x \in \mathbb{Q}_p \, : \, |x| \leq 1 \} \subset \mathbb{Q}_p$, this set is the p-adic analogue of $[0,1) \subset \mathbb{R}$. Finally, the ring of units $\mathbb{Z}_p^\times$ is given by $\mathbb{Z}_p^\times:=\left\{ y \in \mathbb{Z}_p \, : \, \left|y\right|_p = 1 \right\}$.\\[12pt]
For our applications, it is essential to understand the interaction of the Monna map $\varphi$ and the p-adic absolute value. Although the claims are essentially already contained in \cite{Mei67} and are certainly known to mathematicians familiar with this field, we nonetheless include the short proofs here for this reason.
\begin{lemma} \label{lem:monna} Let $x,y \in \mathbb{Z}_p$.
\begin{itemize} 
\item[(i)] The inequality $|x|_p \leq p^{-k}$ implies $|\varphi(x)| \leq p^{-k}$. 
\item[(ii)] On the other hand if $|\varphi(x))| \leq p^{-k}$, then $|x|_p \leq p^{-k-1}$. Moreover we either have $|x|_p \leq p^{-k}$ or $x = p^{k-1}$.
\item[(iii)] If $\varphi(x) = \varphi(y)$ then $x=y$ or
$$x = a_0 + a_1p + \ldots + a_{k-1}p^{k-1}$$
with $a_{k-1}>0$ and
$$y = a_0 + a_1p + \ldots + (a_{k-1}-1)p^{k-1}+\sum_{i=k}^\infty (p-1)p^i.$$
\end{itemize}
\end{lemma}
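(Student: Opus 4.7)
The plan is to read everything off the standard $p$-adic expansion $x = \sum_{i=0}^\infty a_i p^i$ and compare it termwise to the real expansion $\varphi(x) = \sum_{i=0}^\infty a_i p^{-i-1}$.

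For (i), I would use that the hypothesis $|x|_p \leq p^{-k}$ is equivalent to $a_0 = \cdots = a_{k-1} = 0$. Substituting into the definition of $\varphi$ and bounding each digit by $p-1$ yields
\[
\varphi(x) = \sum_{i=k}^\infty a_i p^{-i-1} \leq (p-1) \sum_{i=k}^\infty p^{-i-1} = p^{-k},
\]
which is the required inequality.

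For (ii), I would reverse direction and extract information on the digits from the real inequality $\varphi(x) \leq p^{-k}$. A nonzero digit $a_j$ with $j \leq k-2$ would already force $\varphi(x) \geq p^{-j-1} \geq p^{-k+1} > p^{-k}$, so the hypothesis forces $a_0 = \cdots = a_{k-2} = 0$. The interesting case is then $a_{k-1}$: if it vanishes we obtain $|x|_p \leq p^{-k}$ at once; if $a_{k-1} \geq 2$ we get $\varphi(x) \geq 2 p^{-k} > p^{-k}$, a contradiction; and if $a_{k-1} = 1$, then $\varphi(x) \geq p^{-k}$ combined with the hypothesis forces equality, which pins $a_i = 0$ for all $i \geq k$ and yields the exceptional element $x = p^{k-1}$.

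Part (iii) is the familiar non-uniqueness of real base-$p$ expansions. Assume $\varphi(x) = \varphi(y)$ and $x \neq y$, and let $k-1$ be the smallest index where the digit sequences $(a_i), (b_i)$ of $x$ and $y$ differ, say with $a_{k-1} > b_{k-1}$. Writing
\[
(a_{k-1} - b_{k-1}) p^{-k} = \sum_{i \geq k} (b_i - a_i) p^{-i-1}
\]
and bounding the right side by the geometric series $(p-1) \sum_{i \geq k} p^{-i-1} = p^{-k}$ shows that $a_{k-1} - b_{k-1} = 1$ and every inequality is saturated, giving $a_i = 0$ and $b_i = p-1$ for all $i \geq k$ as claimed.

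The main obstacle is the boundary case in (ii): because $\varphi(p^{k-1}) = p^{-k}$ exactly saturates the real threshold, the naive implication $|\varphi(x)| \leq p^{-k} \Rightarrow |x|_p \leq p^{-k}$ must be weakened to accommodate this single orbit. Carefully separating equality from strict inequality in the real bound is precisely what produces the exceptional element $x = p^{k-1}$, and the same saturation phenomenon drives the dichotomy in (iii).
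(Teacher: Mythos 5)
Your proof is correct and follows essentially the same route as the paper: read off the digit expansions, bound the tail by the geometric series $(p-1)\sum p^{-i-1}$, and track the saturation case, which produces both the exceptional element $x=p^{k-1}$ in (ii) and the two-expansion dichotomy in (iii) (where you supply the details the paper leaves implicit). Note only that the bound ``$|x|_p\leq p^{-k-1}$'' in the statement of (ii) is evidently a typo for $|x|_p\leq p^{-k+1}$, which is exactly what your dichotomy yields, so nothing further is needed.
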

It is important to note that $|\varphi(a)| \neq|a|_p$ can thus hold because this is among the main reasons why the arguments in the proof of Theorem~\ref{thm:transfer} cannot be simply reverted to obtain the opposite implication.
\begin{proof}
(i) Let $x \in \mathbb{Z}_p$ with $|x|_p \leq p^{-k}$. By the definition of the p-adic absolute value this implies $x = a_kp^k + a_{k+1}p^{k+1} + \ldots$ with $0 \leq a_k \leq p-1$. Therefore,
\begin{align*}
|\varphi(x)| & = a_k p^{-k-1} + a_{k+1}p^{-k-2} + \ldots\\
&\leq (p-1) p^{-k-1} \sum_{i=0}^\infty p^{-i}\\
&= \frac{p-1}{p^{k+1}} \frac{p}{p-1} = p^{-k}.
\end{align*}
(ii) Let $x\in \mathbb{Z}_p$ with
$$x = \sum_{i=0}^\infty a_i p^i$$
such that
$$\varphi(x) = \sum_{i=0}^\infty a_i p^{-i-1} \leq p^{-k}.$$
By definition $|\varphi(x)| \leq p^{-k}$ implies that $a_i = 0$ for $i < k-2$ and either $a_{k-1} = 0$ or $a_{k-1} = 1$ and $a_{l} = 0$ for all $l \geq k$.\\[12pt]
(iii) The claim follows again by definition of the Monna map and the limit of the geometric series.
\end{proof}
According to (iii) the preimage of $\varphi$ is well-defined if $a_i \neq p-1$ for infinitely many $i$. If this condition is violated we have a degree of freedom, which is the reason why we needed to make a choice in the definition in $\varphi^{-1}$ to get rid of the ambiguity described in (iii) of Lemma~\ref{lem:monna}. %We have thus achieved the equivalence $|y|\leq p^{-k}$ if and only if $|\varphi^{-1}(y)|_p \leq p^{-k}$ for all $y \in (0,1)$ which puts us into the position to prove Theorem~\ref{thm:transfer}.
\begin{proof}[Proof of Theorem~\ref{thm:transfer}] 
Let $y_k = \varphi^{-1}(x_k)$ be a (real) sequence with $\alpha$-pair correlations and $s \geq 0$ be arbitrary. Therefore, we may without loss of generality assume $y_k \neq y_l$ for $k \neq l$ because the potential exceptions may not influence the limit of $F_{N,\alpha}(0)$. Then
$$|x_l - x_m|_p \leq \frac{s}{N^\alpha}$$
holds if and only if
$$|x_l - x_m|_p \leq p^{-k}$$
for the smallest $k \in \mathbb{N}_0$ such that $p^{-k} \leq \frac{s}{N^\alpha}$. By the definition of the inverse Monna map $\varphi^{-1}$ and Lemma~\ref{lem:monna} (ii) this holds if and only if 
$$|\varphi^{-1}(x_l)-\varphi^{-1}(x_m)| \leq p^{-k}.$$
or 
$$\varphi^{-1}(x_l) - \varphi^{-1}(x_m) = p^{-k-1}.$$
Since the elements of the sequence are distinct, for each $x_l$ there are at most two elements $x_m,x_{m'}$ with  $m,m' \in \mathbb{N}$ satisfying the latter condition. If $\alpha < 1$, then
$\frac{1}{N^2} \frac{1}{\mu(D_p(0,s/N^{\alpha}))} 2N \to 0$, i.e., the proportion of these elements is negligible in comparison to the prefactor, and hence we may without loss of generality assume that $|\varphi^{-1}(x_l) - \varphi^{-1}(x_m)| \leq p^{-k}$ for all $l,m \in \mathbb{N}$. For $\alpha = 1$ a different argument is required: let $s_0=1$ and consider the subsequence $N_k= \frac{1}{p^k}$. If 
$$\frac{1}{N_k} \# \{ 1 \leq l \neq m \leq N_k \, : \, \norm{y_l - y_m} = p^{-k}\} \to c > 0$$ then for for $s = 1 - c/4$ the limit must satisfy
$$\frac{1}{N_k} \# \{ 1 \leq l \neq m \leq N_k \, : \, \norm{y_l - y_m} \leq s \} \leq 2-c<2s$$
and we arrived at a contradiction. Hence we may as well assume without loss of generality that
\begin{align*} \# & \left\{ 1 \leq l \neq m \leq N : |x_l-x_m|_p \leq p^{-k} \right\}\\ & = \# \left\{ 1 \leq l \neq m \leq N : |y_l-y_m| \leq p^{-k} \right\}
\end{align*}
It follows that
\begin{align*}
\frac{1}{N^2} & \frac{1}{\mu(D_p(0,s/N^{\alpha}))} \# \left\{ 1 \leq l \neq m \leq N : |x_l-x_m|_p \leq \frac{s}{N^\alpha} \right\} \\
& = \frac{1}{N^2} \frac{1}{\mu(D_p(0,s/N^{\alpha}))} \# \left\{ 1 \leq l \neq m \leq N : |x_l-x_m|_p \leq p^{-k} \right\}\\
& = \frac{1}{N^2} p^k \# \left\{ 1 \leq l \neq m \leq N : |y_l-y_m| \leq p^{-k} \right\}.
\end{align*}
We have $\frac{s}{N^\alpha} = p^{-k}x$, where $x$ depends on $s$ and $N$ and $\frac{1}{p} < x = x(s,N) \leq 1$. Thus we can rewrite $F_{N,\alpha,p}(s)$ as
\begin{align*} \frac{1}{N^2} & \frac{N^\alpha}{\frac{s}{x}} \# \left\{ 1 \leq l \neq m \leq N : |y_l-y_m| \leq \frac{\frac{s}{x}}{N^\alpha}\right\}\\ & = \frac{1}{N^{2-\alpha}}\frac{1}{\frac{s}{x}} \# \left\{ 1 \leq l \neq m \leq N : |y_l-y_m| \leq \frac{\frac{s}{x}}{N^\alpha}\right\}
\end{align*}
Since $(y_n)_{n \in \mathbb{N}}$ has Poissonian pair correlations, it follows that
$$\frac{1}{N^{2-\alpha}} \# \left\{ 1 \leq l \neq m \leq N : |y_l-y_m| \leq \frac{s}{N^\alpha}\right\} \to s$$
for all $s\geq0$. Here the absolute value appears instead of the distance from the nearest integer $\norm{\cdot}$. For a formal proof of this well-known fact in the case $\alpha = 1$ we refer the reader to \cite{HZ23}, Proposition A, and it also holds for $\alpha < 1$. Therefore, it is enough to show that the convergence is uniform for $s$ in a compact interval. This suffices because for $\varepsilon > 0$ arbitrary it then holds that 
$$ \left| \frac{1}{N^{2-\alpha}}\frac{1}{\frac{s}{x}} \# \left\{ 1 \leq l \neq m \leq N : |y_l-y_m| \leq \frac{\frac{s}{x}}{N^\alpha}\right\} - 1\right| < \varepsilon$$
for all $N \geq N_0$ independent of $x$. Observing that $x \in [\tfrac{1}{p},1]$, a compact interval, this would complete the proof. The missing statement can be proven by the following standard result, which may be regarded as a variant of Dini's Theorem, compare \cite{Rud76}, Theorem 7.13.
\begin{lemma} \label{lem:convergence}
    Let $f_N:[a,b] \to \mathbb{R}$ be a sequence of functions such that $f_N$ is non-decreasing for every $N \in \mathbb{N}$. If $f_N$ converges pointwise to a continuous function $f:[a,b] \to \mathbb{R}$, then the convergence is uniform.
\end{lemma}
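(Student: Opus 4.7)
The plan is to adapt the standard proof of Dini's theorem, replacing the monotonicity of the sequence $f_N$ in $N$ (which is the hypothesis of Dini's theorem) with monotonicity of each $f_N$ in the variable $x$ together with continuity of the limit $f$. The key observation is that the pointwise limit of a sequence of non-decreasing functions is itself non-decreasing, so $f$ is continuous \emph{and} monotone on the compact interval $[a,b]$, hence uniformly continuous.

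Fix $\varepsilon>0$. By uniform continuity of $f$ on $[a,b]$, I would choose a partition $a=t_0<t_1<\cdots<t_m=b$ fine enough that $f(t_{i+1})-f(t_i)<\varepsilon$ for every $i$. Since pointwise convergence holds at each of the finitely many points $t_0,\dots,t_m$, there exists $N_0\in\mathbb{N}$ such that $|f_N(t_i)-f(t_i)|<\varepsilon$ for all $N\geq N_0$ and all $i=0,\dots,m$.

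Now for arbitrary $x\in[a,b]$, choose $i$ with $t_i\leq x\leq t_{i+1}$. Using that $f_N$ is non-decreasing, sandwich $f_N(x)$ between $f_N(t_i)$ and $f_N(t_{i+1})$, and use the two approximations on the endpoints together with $f(t_{i+1})-f(t_i)<\varepsilon$ and the monotonicity of $f$ to obtain
\[
f(x)-2\varepsilon \;\leq\; f_N(t_i)-\varepsilon+f(t_i)-f(x)+\varepsilon \;\leq\; f_N(x) \;\leq\; f_N(t_{i+1})+\varepsilon-\varepsilon+f(t_{i+1})-f(x) \;\leq\; f(x)+2\varepsilon.
\]
Since $N_0$ does not depend on $x$, this yields $\sup_{x\in[a,b]}|f_N(x)-f(x)|\leq 2\varepsilon$ for all $N\geq N_0$, which is uniform convergence.

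There is no real obstacle: the only subtle point is to verify that $f$ is non-decreasing (immediate from the pointwise limit preserving the inequality $f_N(x)\leq f_N(y)$ for $x\leq y$) so that one can use uniform continuity of $f$ to control the oscillation of $f$ on each subinterval, and then route the estimate on $f_N(x)$ through the endpoints $t_i,t_{i+1}$ using the monotonicity of $f_N$ in $x$.
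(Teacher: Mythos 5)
Your proof is correct and follows essentially the same route as the paper's: partition $[a,b]$ finely enough that $f$ oscillates by less than $\varepsilon$ on each subinterval, use pointwise convergence at the finitely many partition points, and then sandwich $f_N(x)$ between its values at the neighbouring partition points via the monotonicity of $f_N$. One cosmetic remark: your displayed chain of inequalities is garbled as written (the clean chain is $f(x)-2\varepsilon \le f(t_i)-\varepsilon \le f_N(t_i) \le f_N(x) \le f_N(t_{i+1}) \le f(t_{i+1})+\varepsilon \le f(x)+2\varepsilon$), but the argument you describe in words is exactly the intended one, and the appeal to monotonicity of $f$ is not even needed since continuity of $f$ on the compact interval already controls its oscillation on each subinterval.
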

All the assertions from Lemma~\ref{lem:convergence} are indeed fulfilled: first, the function $F_{N,\alpha}^*(s):= \frac{1}{N^{2-\alpha}} \# \left\{ 1 \leq l \neq m \leq N : |y_l-y_m| \leq \frac{s}{N^\alpha}\right\}$ is non-decreasing in $s$ for all $N \in \mathbb{N}$. Second, we know that $F_{N,\alpha}^*(s) \to s$ pointwise, because $(y_n)_{n \in \mathbb{N}}$ is Poissonian, i.e., the limit is a continuous function. Hence the convergence is uniform on any compact interval, which finishes our proof.\\[12pt]
\end{proof}
For the sake of completeness we also include the proof of Lemma~\ref{lem:convergence}
\begin{proof} Let $f$ be the pointwise limit and $\varepsilon > 0$. Since $f$ is continuous, we may choose $a = t_0 < t_1 < \ldots < t_n = b$ such that $|f(x)-f(y)|< \varepsilon$ for all $x,y \in I_i := [t_{i-1},t_{i}]$ and all $i = 1, \ldots, n$. By the pointwise convergence there exists $N_0 \in \mathbb{N}$ such that
$$|f_N(t_i)-f(t_i)| < \varepsilon$$
for all $i = 0, \ldots, n$ and all $N \geq N_0$. As the $f_n$ are non-decreasing it thus holds for these $N$ that
$$f(t_{i-1}) - \varepsilon < f_N(t_{i-1}) < f_N(t) < f_N(t_i) < f(t_i) + \varepsilon$$
for all $t \in I_i$. By the choice of the intervals $I_i$ it follows that
$$f(t) - 2 \varepsilon < f_N(t) < f(t) + 2 \varepsilon$$
for all $t \in [a,b]$ which yields the claim.
\end{proof}

%\section{Rest} \label{sec2}

\bibliographystyle{alpha}
\bibdata{literatur}
\bibliography{literatur}

\textsc{Ruhr West University of Applied Sciences, Duisburger Str. 100, D-45479 M\"ulheim an der Ruhr,} \texttt{christian.weiss@hs-ruhrwest.de}

\end{document}